\theoremstyle{plain}
\setlist{noitemsep,topsep=6pt,parsep=0pt,partopsep=0pt}
\def\BState{\State\hskip-\ALG@thistlm} \makeatother
\patchcmd{\@maketitle}{\begin{center}}{\begin{flushleft}}{}{}
\patchcmd{\@maketitle}{\begin{tabular}[t]{c}}{\begin{tabular}[t]{@{}l}}{}{}
\patchcmd{\@maketitle}{\end{center}}{\end{flushleft}}{}{}
\DeclareMathAlphabet{\mathpzc}{OT1}{pzc}{m}{it}
\newtheorem{thm}{Theorem}[section]
\newtheorem{lem}[thm]{Lemma}
\newtheorem{cor}[thm]{Corollary}
\newtheorem{dfn}[thm]{Definition}
\def\mtree{\Gamma}
\def\md{\mathbf{\alpha}}
\def\leaves{\Lambda}
\def\leaf{\lambda}
\newcommand{\style}[1]{{\emph{#1}}}  
\newcommand{\real}{{\mathbb R}}
\newcommand{\dist}{{f}}             
\newcommand{\Dist}{{\mathfrak{D}}}  
\newcommand{\inv}{^{-1}}
\newcommand{\ucat}{{{\sf ucat}}}         
\newcommand{\uni}{{u}}              
\title{Minimal Unimodal Decomposition on Trees}
\author{Yuliy Baryshnikov}
\address{Departments of Mathematics and Electrical \& Computer Engineering, University of Illinois}
\email{ymb@illinois.edu}
\thanks{Work supported by the National Science Foundation via DMS-1622370.}
\author{Robert Ghrist}
\address{Departments of Mathematics and Electrical \& Systems Engineering, University of Pennsylvania}
\email{ghrist@math.upenn.edu}
\thanks{Work supported by the Office of the Assistant Secretary of Defense Research \& Engineering through ONR N00014-16-1-2010.}
\begin{document}
\maketitle
\begin{abstract}
The decomposition of a density function on a domain into a minimal sum of unimodal components is a fundamental problem in statistics, leading to the topological invariant of \style{unimodal category} of a density. This paper gives an efficient algorithm for the construction of a minimal unimodal decomposition of a tame density function on a finite metric tree.
\end{abstract}

\section{Introduction}

\subsection{Motivation}

In data analysis, the operation of clustering is fundamental. At its base is a problem wedged between geometry and topology: given a set of points and a notion of distance or proximity among them, compute a parsimonious division into sets of points that are mutually {\em close}.  In practice, this is a subtle question that depends sensitively on the models of proximity for the input and the desired nature of the output \cite{Kleinberg,CM}.

The discretized nature of this problem obscures the more topological (and ancient) problem of minimal decomposition of a space into simple pieces or, suggestively, {\em modal domains}. For a geometric domain $D\subset\mathbb{E}^n$, one might ask for the minimal number of convex pieces into which $D$ can be decomposed, minimal number providing an unambiguous descriptor of geometric complexity. For a topological space $X$, the minimal mode is not a convex domain but rather a \style{contractible} subset $U\subset X$, meaning that $U$ has the homotopy type of a point. The minimal number of such subsets covering $X$ is called the \style{geometric category} of $X$ and is a homeomorphism invariant of $X$. There is a family of related topological categories, the most important being the \style{Lusternik-Schnirelmann category} (using subsets nullhomotopic in $X$) which branches quickly into homotopy theory \cite{Cornea+}. Other related notions of minimal topological decomposition include the classical \style{sectional category} (or \style{Schwarz genus}) of a fibration and the more modern variant of \style{topological complexity} of path planning \cite{Farber}.

This paper concerns a weighted version of these problems adapted to statistics. Given a {\em density} $f\colon X\to[0,\infty)$, what is the minimal number of {\em modes} into which it can be decomposed as a sum? In the geometric version of this problem, one natural notion of a mode is a Gaussian, and the problem of minimal approximation of a density as a sum of Gaussians is well-studied on one-dimensional domains \cite{B,E,RF}. This choice of a Gaussian as fundamental mode is somewhat geometrically rigid, analogous to the clustering of a space into convex pieces. One can imagine other basis unimodal distributions \cite{Ka,Ke}.

This paper concerns the coordinate-free topological version of the problem of minimal decompositions of densities over a space --- a statistical analogue of the category of a domain.  The resulting invariant is called the \style{unimodal category} \cite{BG,EAT}.

\subsection{Unimodal category}

For $X$ a topological space, let $\Dist=\Dist(X)$ denote the set of all compactly supported continuous functions $\dist:X\to[0,\infty)$. Such a function $\uni\in\Dist$ is \style{unimodal} if the upper excursion sets $\uni^c=\uni\inv([c,\infty))$ have the homotopy type of a point for all $0<c\leq M$ and are empty for all $c>M$. Such a $\uni$ has $M$ as its maximal value.

We will refer to the nonempty upper excursion sets $\uni^c\subset X$ as being \style{contractible}, though it must be clarified that such sets are contractible {\em in themselves} as opposed to being merely contractible in $X$. The latter would be more in line with the definitions used in Lusternik-Schnirelmann theory, but is less relevant for most applications (where $X=\real^n$).

\begin{dfn}
\label{def:unicat}
The \style{unimodal category} of $\dist\in\Dist(X)$ is the minimal number $\ucat(\dist)$ of unimodal distributions $\uni_\alpha, \alpha=1,\ldots,\ucat(\dist)$ on $X$ such that $\dist$ is the pointwise sum of the collection $\{\uni_\alpha\}$.
\end{dfn}

In the data analysis interpretation of the unimodal functions, where the mode corresponds to {\em signal}, and the spread of the density around it, to {\em noise}, it makes sense to assume some similarities of the noise generating mechanisms for different modes. In the world of Gaussian distribution this leads to assumption of the fixed, or slowly varying covariant form. A parsimonious, homeomorphism-invariant version would assume a much weaker formulation, which nonetheless strengthens significantly the notion of unimodal category, as follows:

\begin{dfn}
\label{def:str_ucat}
The \style{strong unimodal category} of $\dist\in\Dist(X)$ is the minimal number $\ucat(\dist)$ of distributions $\uni_\alpha, \alpha=1,\ldots,\ucat(\dist)$ on $X$ that sum up to $\dist$ such that any intersection of the upper excursion sets is either empty or contractible.
\end{dfn}

Since unimodal functions remain unimodal under a homeomorphic change of coordinates, the unimodal category is a coordinate-free invariant of a density. This makes it of significant potential use in applications where data is collected from noisy or otherwise uncertainly located samples. The initial paper on the subject gave a constructive algorithm for computing the unimodal category on $\mathbb{R}^1$, along with generalizations to unimodal categories based on pointwise norms rather than addition \cite{BG}. The thesis of Govc showed just how subtle and difficult the problem of computation of these invariants is in higher dimensions \cite{Govc}. Recent applications of the unimodal category in 1-d by Huntsman are currently being applied to problems of mixture estimation in statistics \cite{Huntsman}.

We believe that {\em strong} unimodal category would prove more amenable to analysis, but postpone this line of research till later, noting only that for our model, where the underlying topological space is a metric tree, the notion of strong unimodal category is identical with the standard one.

\subsection{Contribution}
The primary contribution of this note is the presentation and proof of correctness of an efficient greedy algorithm for the computation of a minimal unimodal decomposition of a density over a tree, this yielding the unimodal category. More than simply computing a topological invariant, this method permits identification of ``essential'' modes which are local maxima for {\em any} minimal unimodal decomposition. We believe that this extension will permit novel applications of the unimodal category and point the way to computational methods on suitably restricted higher dimensional domains.

As a perhaps fanciful toy problem for the sake of motivation, consider the following scenario. Suppose one wishes to detect radiological substances by means of crude sensors mounted on vehicles in traffic on city streets. At any given time, the network of vehicles returns a sampling of a distribution on a graph (radioactivity levels restricted to the idealized 1-d cell complex of city streets). It is perhaps known that there are certain expected modes --- at, say, hospitals, research facilities, and universities. In a dense urban environment, one might have difficulty distinguishing true modes from interference modes, and the problem of noise in position measurement is a further complicating feature. It is in such circumstances that a minimal unimodal decomposition may assist as an unambiguous warning of a significant unexpected mode.

\section{Setup}
\label{sec:setup}
\subsection{Linearization}
\label{sec:assumptions}
Let $\mtree$ be a finite metric tree, that is, a 1-dimensional compact contractible metric space stratified into 0-dimensional {\em vertices} and 1-dimensional (open) {\em edges}. All subtrees of $\mtree$ are assumed to inherit both the cell structure and the metric of $\mtree$. The subtrees of $\mtree$ form a finite lattice.

In order to work with finite unimodal decompositions, we henceforth assume that $\Dist(\mtree)$ is restricted to functions with a finite number of critical points. Given such an $f\in\Dist(\mtree)$, we can assume that the restriction of the function to each edge is weakly monotonic, by adding new vertices at local maxima and minima. Using the restriction of $f$ to each edge (or an affine linear function of it) as a coordinate, we can also assume that $f$ is \style{edge-linear}, that is, $f$ restricted to each edge is an affine function of that edge with respect to the internal metric structure: see Figure \ref{fig:unimodalfunction}.

\begin{figure}[htbp]
\begin{center}
\includegraphics[width=5in]{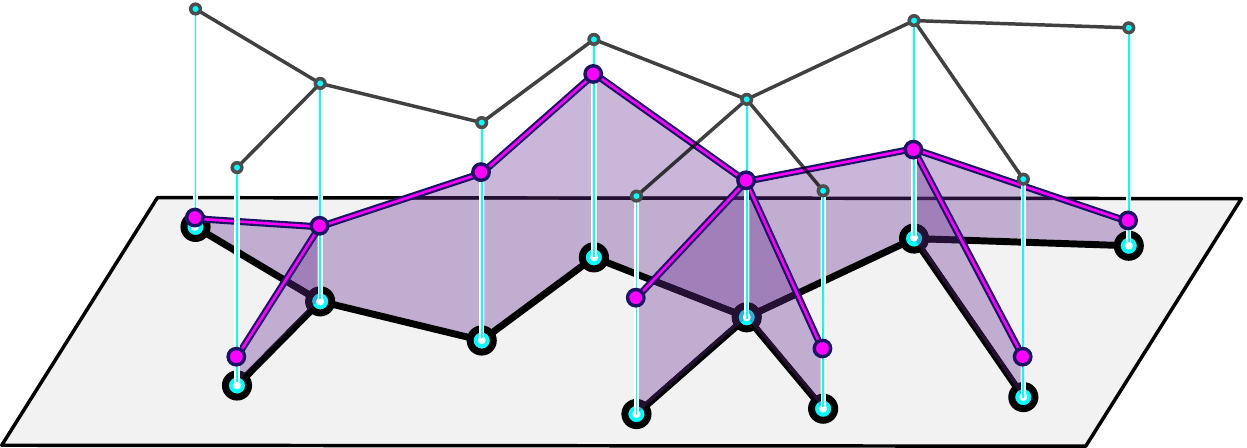}
\caption{Example of an edge-linear unimodal function on a tree.}
\label{fig:unimodalfunction}
\end{center}
\end{figure}

\begin{lem}
\label{lem:contraction}
If $f$ is constant on an edge $e$ of $\mtree$, then the operation of contraction by $e$ taking $\mtree$ to the tree $\mtree/e$ sends unimodal decompositions of $f$ on $\mtree$ to unimodal decompositions of $f$ on $\mtree/e$ preserving number of modes. This operation is reversible, taking any unimodal decomposition of $f$ on $\mtree/e$ to a mode-preserving decomposition of $f$ on $\mtree$.
\end{lem}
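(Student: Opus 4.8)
The plan is to handle the two directions separately: the reverse (pull-back) direction is essentially immediate, and the forward direction reduces to a single \emph{flattening} statement, which is where the real work lies. Write $\pi\colon\mtree\to\mtree/e$ for the contraction map and $c_e$ for the constant value of $f$ on $e$; since $f$ is constant on $e$ it descends to a function $\bar f$ on $\mtree/e$ with $f=\bar f\circ\pi$. For the reverse direction, given a unimodal decomposition $\bar f=\sum_\alpha\bar u_\alpha$ on $\mtree/e$, set $u_\alpha:=\bar u_\alpha\circ\pi$. Then $\sum_\alpha u_\alpha=f$ and $u_\alpha^c=\pi^{-1}(\bar u_\alpha^c)$; since the preimage under $\pi$ of a subtree of $\mtree/e$ is again a subtree of $\mtree$ (the collapsed vertex, if it occurs, is simply re-inflated to the edge $e$) and is empty precisely when the original is, each $u_\alpha$ is unimodal and the number of modes is unchanged. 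The decompositions produced this way are exactly those unimodal decompositions of $f$ in which every $u_\alpha$ is constant on $e$, so $\pi^\ast$ is a mode-preserving bijection from decompositions of $\bar f$ onto the ``$e$-constant'' decompositions of $f$.

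Consequently the forward direction reduces to the following flattening claim: every unimodal decomposition $f=\sum_{\alpha=1}^{k}u_\alpha$ on $\mtree$ can be replaced by a unimodal decomposition $f=\sum_{\alpha=1}^{k}w_\alpha$ with the same number of modes in which each $w_\alpha$ is constant on $e$ --- one then pushes the $w_\alpha$ forward by $\pi$. I would prove the claim by induction on $k$, flattening one mode at a time. Write $\mtree=T_0\cup e\cup T_1$ for the decomposition at $e$, with $v_0\in T_0$ and $v_1\in T_1$ the endpoints of $e$, and put $a_\alpha=u_\alpha(v_0)$, $b_\alpha=u_\alpha(v_1)$, so $\sum_\alpha a_\alpha=\sum_\alpha b_\alpha=c_e$. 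The restriction $u_\alpha|_e$ is unimodal on the interval $e$ (it is a subtree of $\mtree$ intersected with an edge), hence $u_\alpha\geq\min(a_\alpha,b_\alpha)$ on $e$; assuming $a_1\leq b_1$, the function $w_1$ defined to equal $u_1$ on $T_0$, the constant $a_1$ on $e$, and $\min(u_1,a_1)$ on $T_1$ is then continuous, satisfies $0\le w_1\le u_1$, is constant on $e$, and --- using that the superlevel sets of $u_1$ are subtrees of $\mtree$ and that $u_1(v_0)=a_1$ --- is still unimodal. (The pendant case, where $T_1=\{v_1\}$, is the baby case of this: one simply restricts the decomposition to $T_0$.)

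The step I expect to be the genuine obstacle is completing the induction. One must show that $f-w_1$ --- which equals $\sum_{\alpha\ge2}u_\alpha+(u_1-w_1)$, is again constant on $e$, and for which $u_1-w_1$ is a nonnegative unimodal ``residual'' supported on $e\cup T_1$ and vanishing at $v_0$ --- admits a unimodal decomposition into only $k-1$ modes, so that the induction hypothesis applies. In other words the residual must be re-absorbed into the remaining modes without destroying unimodality; note this is a genuine issue, since there exist unimodal decompositions of an $e$-constant $f$ in which some $u_\alpha$ attains an interior maximum along $e$, so one cannot leave the other modes untouched. Here one has to exploit the rigidity forced by the constraint $\sum_\alpha u_\alpha\equiv c_e$ along the interval $e$: the modes $u_2,\dots,u_k$ collectively shed exactly the mass $u_1-w_1$ along $e$, which pins down the behaviour of their superlevel sets near $e$ and should let one merge the residual into a suitably chosen mode, checking unimodality of the sum via the subtree structure of superlevel sets (possibly after first normalizing the $u_\alpha|_e$ so that few of them are non-constant). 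As a consistency check on the whole argument, observe that $\pi$ restricts to a homotopy equivalence $f^c\to\bar f^c$ for every $c$ --- an edge-collapse inside the subtree $f^c$ when $c\le c_e$, and a homeomorphism when $c>c_e$ --- so the homotopy types of all superlevel sets of $(f,\mtree)$ and $(\bar f,\mtree/e)$ coincide; this is the structural reason the unimodal category is unaffected by the contraction, and any correct flattening must be compatible with it.
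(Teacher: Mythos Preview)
The paper states this lemma without proof and moves directly to the next lemma, so there is nothing explicit to compare against; the authors evidently regard it as routine.  Your reverse (pull-back) direction is correct and complete, and your reduction of the forward direction to a ``flattening'' claim is the right move.

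The inductive scheme you propose for the flattening, however, has a genuine gap precisely where you flag it.  With $a_1\le b_1$ your $w_1$ is simply $\min(u_1,a_1)$, so the residual is $r=\max(u_1-a_1,0)$, itself unimodal with mode on the $T_1$ side.  Thus
\[
f-w_1 \;=\; u_2+\cdots+u_k+r
\]
is exhibited as a sum of $k$, not $k-1$, unimodal pieces, and the induction does not advance unless $r$ can be merged into one of the remaining $u_\beta$.  But this merger can fail: if $u_1$ happens to be the \emph{only} summand with mode on the $T_1$ side, every candidate $u_\beta$ has its mode in $T_0$, and $u_\beta+r$ is typically bimodal (a peak in $T_0$ from $u_\beta$ and one in $T_1$ from $r$).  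The constraint $\sum_\alpha u_\alpha\equiv c_e$ on $e$ controls what happens \emph{on} $e$ but says nothing about the shape of $r$ inside $T_1$, so the ``rigidity'' you invoke does not by itself force absorbability.  Your final remark about $\pi$ inducing homotopy equivalences $f^c\simeq\bar f^c$ is a correct consistency check, but it does not close the gap: unimodal category is not determined by the homotopy types of the superlevel sets alone (already on an interval it depends on the actual values, not just the merge tree).

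A cleaner route --- and more in the spirit of what the paper does immediately afterward --- is to avoid the one-at-a-time induction.  First replace each $u_\alpha$ by its edge-linear interpolant (the one-line argument of the next lemma is independent of this one), so that every $u_\alpha$ is affine on $e$ with mode at a vertex.  Then flatten \emph{all} modes at once: set $w_\alpha\equiv a_\alpha$ on $e$, keep $w_\alpha=u_\alpha$ on $T_0$, and redistribute on $T_1$ in a single step using the balance $\sum_\alpha(a_\alpha-b_\alpha)=0$.  The existence of the required redistribution on $T_1$ is then one linear feasibility problem of exactly the flavor the paper later handles with a Farkas-type argument; doing it globally sidesteps the absorption obstruction that your inductive version runs into.
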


\begin{lem}
\label{lem:linearmodes}
If $f=\sum f_\md$ is a unimodal decomposition of an edge-linear $f\in\Dist(\mtree)$, then its components $f_\md$ can be chosen to be edge-linear as well.
\end{lem}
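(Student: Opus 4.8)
The goal is: given a unimodal decomposition $f = \sum_\md f_\md$ of an edge-linear function, replace each $f_\md$ by an edge-linear unimodal function without changing the number of summands, while keeping the sum equal to $f$. The plan is to proceed one edge at a time and "straighten" the summands on each edge, showing that the straightening operation preserves unimodality of each piece.

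First I would fix an edge $e$ of $\mtree$, parametrized by $[0,1]$ via the internal metric, so that $f|_e$ is affine. For each mode $\md$, the restriction $f_\md|_e$ is some nonnegative continuous function $g_\md$ with $\sum_\md g_\md = f|_e$ affine. I would replace each $g_\md$ by the affine interpolation $\tilde g_\md$ of its endpoint values $g_\md(0), g_\md(1)$; since affine interpolation is linear in the function and the sum of the endpoint values of the $g_\md$ equals the endpoint values of the affine $f|_e$, linearity gives $\sum_\md \tilde g_\md = f|_e$ exactly, and nonnegativity is preserved because the $\tilde g_\md$ interpolate between nonnegative values. Doing this simultaneously on every edge yields candidate edge-linear summands $f_\md'$ agreeing with $f_\md$ at all vertices, with $\sum f_\md' = f$.

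The substantive step is to verify that each $f_\md'$ is still unimodal, i.e. that all its nonempty upper excursion sets $(f_\md')^c = (f_\md')^{-1}([c,\infty))$ are contractible. Here I would exploit the tree structure: a subset of a tree is contractible iff it is connected (nonempty, with no "holes," which on a tree means just path-connectedness). The excursion set $(f_\md)^c$ was connected for every $c$. On a single edge $e = [0,1]$, the set $\{t : g_\md(t) \ge c\}$ is some closed subset of $[0,1]$, while $\{t : \tilde g_\md(t) \ge c\}$ is a subinterval; the key observation is that replacing $g_\md$ by its affine interpolation can only turn the trace on each edge into a single subinterval, and crucially it preserves which endpoints of the edge lie in the excursion set (since endpoint values are unchanged), and it preserves the property that if both endpoints lie in the set then the whole edge does. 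From this I would argue that connectivity of $(f_\md')^c$ follows from connectivity of $(f_\md)^c$: the vertex-level incidence pattern of the excursion set is unchanged, and on each edge the new trace is the "convexification" of the old one, which can only merge the set into fewer components, never more — and since the global set was connected before and the vertex pattern is fixed, it remains connected. The emptiness-above-$M$ condition is immediate since $\max f_\md' = \max f_\md$ over vertices.

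I expect the main obstacle to be the last connectivity argument, specifically handling the case where $(f_\md)^c$ meets an edge $e$ only in its interior (a "bump" in the middle of $e$ that doesn't reach either vertex): after straightening, such a trace disappears entirely, since $\tilde g_\md$ on $e$ is affine with both endpoint values below $c$. One must check this cannot disconnect $(f_\md')^c$ — but it cannot, because a component of the original excursion set living strictly inside one edge is, by connectedness of $(f_\md)^c$, the \emph{entire} excursion set, which then forces $\max f_\md$ to be attained in the interior of $e$; since $f_\md \le f$ and $f$ is edge-linear hence has no interior-edge local maxima unless constant there, one reduces (via Lemma \ref{lem:contraction}) to a configuration where this does not occur, or handles it directly by noting the straightened mode then has its excursion set supported on the single edge $e$ and is an affine bump, still unimodal. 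Assembling these cases into a clean statement is the only delicate point; everything else is linearity of interpolation and the elementary topology of trees.
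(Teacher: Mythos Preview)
Your approach---replace each $f_\md$ by the edge-linear interpolation of its vertex values---is exactly the paper's, though the paper's proof is a two-line assertion that the interpolant remains unimodal and that the interpolants sum to $f$, with no further justification.

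Your connectivity argument in the penultimate paragraph is in fact already complete: if $(f_\md)^c$ is connected, the vertices it contains span a subtree (tree paths are unique, so any vertex on the path between two vertices of $(f_\md)^c$ lies in $(f_\md)^c$); every edge between two such vertices lies entirely in $(f_\md')^c$ by affinity; and an affine function on an edge with both endpoint values below $c$ has empty $c$-superlevel set. Hence $(f_\md')^c$ is this vertex-subtree with some half-edges attached---connected or empty. The ``interior bump'' case you worry about therefore causes no trouble: it simply means $(f_\md')^c=\varnothing$ for those $c$, i.e.\ $c$ exceeds the new maximum. Your proposed resolution (a) via $f_\md\le f$ does \emph{not} work (an affine $f$ on an edge in no way prevents a summand $f_\md\le f$ from having an interior peak there), and resolution (b) errs in claiming the straightened mode is supported on a single edge (if $f_\md$ is positive at the endpoints of $e$ its support, and that of the interpolant, extends beyond $e$). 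Neither patch is needed. A tidier packaging: unimodality on a tree is equivalent to being non-increasing along every path away from the mode; this is a condition on vertex values that linear interpolation manifestly preserves, with the mode migrating to the nearer endpoint of its edge if it was interior.
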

\begin{proof}
Replacing a unimodal component $f_\md$ with $f^l_\md$ with its edge-linear interpolation, preserving the values at the vertices of $\mtree$, is again unimodal, and that the resulting components sum up to $f$, as their values at vertices do.
\end{proof}

As an immediate corollary we obtain the following.

\begin{lem}
\label{lem:vertices}
Any unimodal decomposition can be modified (without changing the number of components) so that all component modes are maximized at the vertices of $\mtree$.
\end{lem}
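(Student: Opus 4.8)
The plan is to read this off Lemma \ref{lem:linearmodes} essentially for free, which is presumably why the authors call it an immediate corollary. The one ingredient about trees that I would use is entirely elementary: an affine function on a closed interval attains its maximum at an endpoint of that interval.

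First I would take an arbitrary unimodal decomposition $f=\sum_{\alpha} f_\alpha$ and apply Lemma \ref{lem:linearmodes} to replace it by a decomposition $f=\sum_\alpha f_\alpha^{l}$ into edge-linear unimodal components agreeing with the $f_\alpha$ at every vertex of $\mtree$; by that lemma this is still a unimodal decomposition of $f$, and the number of components is unchanged. Next, fixing one component $u := f_\alpha^{l}$ with maximal value $M$, I would write $\mtree$ as the union of its finitely many closed edges $\bar e$, so that $M=\max_e \max_{\bar e} u$. Because $u$ is affine on each $\bar e$ with respect to the internal edge metric, the restricted maximum $\max_{\bar e} u$ is attained at an endpoint of $e$, and endpoints of edges are vertices of $\mtree$; hence $u(v)=M$ for some vertex $v$. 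Carrying this out for each $\alpha$ yields the claim.

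The only place I would slow down is the phrase ``maximized at the vertices.'' For an edge-linear unimodal $u$ the locus $u^{M}=u^{-1}(M)$ need not be a single point: if $u$ is constant and equal to $M$ along some edge, $u^M$ is a closed edge, or possibly a larger subtree. This is harmless, since any such set still contains a vertex, so the conclusion holds as stated; I do not expect any genuine obstacle here, only this interpretive remark, which is worth a sentence in the final write-up. (If one instead wanted the maximum to be attained at a \emph{unique} vertex, one could afterwards contract any maximal edge on which $u$ is constant via Lemma \ref{lem:contraction}, but the statement as given does not require this.)
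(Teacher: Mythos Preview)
Your argument is correct and is exactly the intended one: the paper offers no separate proof but presents the lemma as an immediate corollary of Lemma~\ref{lem:linearmodes}, and your reasoning---replace by edge-linear components, then use that an affine function on a closed edge attains its maximum at an endpoint---is precisely how that corollary is read off. Your interpretive remark about non-unique maxima is apt and does not affect the conclusion.
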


As a result of Lemmas \ref{lem:contraction}-\ref{lem:vertices}, we henceforth assume that all $f\in\Dist(\mtree)$ and all modes in unimodal decompositions thereof are edge-linear with modes at vertices.

\subsection{Free and forced}
\label{sec:mode-free}
Given the data $f\in\Dist(\mtree)$, we call a subtree $\mtree'\subset\mtree$ \style{mode-free} if there exists a unimodal decomposition of $f$ such that $\mtree'$ is free of modes. Clearly, mode-free subtrees are closed under the operation of taking subtrees. We will call a vertex of $\mtree$ \style{mode-forced} for $f$ if $v$ is a mode of every minimal unimodal decomposition of $f$ on $\mtree$.

\begin{lem}
\label{lem:forced}
For any $f\in\Dist(\mtree)$ satisfying the assumptions of \S\ref{sec:setup}, a mode-forced vertex exists.
\end{lem}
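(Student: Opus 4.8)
The plan is to show that a \emph{global maximum} $v$ of $\dist$ is always mode-forced. (If $\dist$ has several global maxima they are pairwise non-adjacent: $\dist$ is edge-linear, and after contracting any constant edge by Lemma~\ref{lem:contraction} we may assume $\dist$ is non-constant on every edge, so two adjacent vertices cannot both attain the maximum; the argument below then applies verbatim to each global maximum.) Write $M=\dist(v)$. I would actually prove the stronger statement that in \emph{every} unimodal decomposition $\dist=\sum_\md\dist_\md$ at least one component $\dist_\md$ is maximized at $v$; then $v$ is a mode of every minimal decomposition, hence mode-forced.

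First I would record the elementary fact that a unimodal function on $\mtree$ is non-increasing along every geodesic issuing from its mode: if $x$ precedes $y$ on such a geodesic and $\dist_\md(y)>\dist_\md(x)$, then the superlevel set $\dist_\md^{\,\dist_\md(y)}$ contains $y$ and the mode but not $x$, contradicting its contractibility. Two consequences are used repeatedly. (i) For a component with $\dist_\md(v)>0$ whose mode $m_\md\neq v$ there is a unique edge $e_\md$ at $v$ (the first edge on the geodesic to $m_\md$) along which $\dist_\md$ does not decrease, and $\dist_\md$ strictly decreases away from $v$ along every other edge at $v$ --- \emph{unless} $\dist_\md$ is constant at its maximal value along some edge at $v$, in which case $v\in\argmax\dist_\md$ and we are done. (ii) Writing $B_u$ for the branch of $\mtree\setminus\{v\}$ containing a neighbor $u$, if $m_\md\in B_u$ then $\dist_\md(u)\ge\dist_\md(v)$, while if $m_\md\notin B_u$ then $\dist_\md$ is non-increasing from $v$ into $B_u$.

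Now suppose, for contradiction, that no component of a given decomposition is maximized at $v$. Grouping the components by the branch $B_u$ containing their mode and applying (ii) gives $\sum_{m_\md\in B_u}\dist_\md(v)\le\sum_{m_\md\in B_u}\dist_\md(u)\le\dist(u)$ for every neighbor $u$, hence $M=\sum_\md\dist_\md(v)\le\sum_{u\sim v}\dist(u)$. When $v$ is a leaf this already contradicts $\dist(u)<M$, and in general it disposes of every case with $\sum_{u\sim v}\dist(u)<M$; restricting the decomposition to a closed branch $\overline{B_u}$, in which $v$ becomes a leaf and stays the global maximum, it also shows that each branch contains a component that is positive at $v$ and has its mode outside that branch. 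The hard part is the remaining ``balanced'' situation --- $\deg v\ge 2$ and $\sum_{u\sim v}\dist(u)\ge M$ --- where the inequalities above become equalities, forcing every component with mode in $B_u$ to be constant on the edge $vu$ and forbidding any other component from being positive at $u$. Here I would trace the heaviest such component from $v$ toward its mode, using two things: that $v$ is the \emph{global} maximum, so $\dist\le M$ everywhere and the traced component must eventually run into a region where $\dist$ is too small to carry it (making another component negative) or be forced constant at its maximal value across an edge at $v$; and that a minimal decomposition has exactly $\ucat(\dist)$ components, so the mass $M$ at $v$ cannot be partitioned arbitrarily finely. Making this robust when the mass at $v$ splits among several branches, together with the degenerate constant-on-an-edge configurations, is the crux; I anticipate handling it by induction on the number of edges of $\mtree$ (peeling off the branches $B_u$) or, equivalently, by converting any minimal decomposition with no mode at $v$ into one with strictly fewer components. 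In every case some $\dist_\md$ is maximized at $v$, so $v$ is mode-forced.
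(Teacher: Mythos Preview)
Your approach has a genuine gap: the claim that the global maximum is mode-forced is \emph{false}, so no amount of tracing or induction in the ``balanced'' case will rescue it.  Here is a concrete counterexample.  Let $\mtree$ be a star with centre $v$ and three arms $v\!-\!u_i\!-\!w_i$ ($i=1,2,3$), and set $f(v)=4$, $f(u_i)=2$, $f(w_i)=3$.  Then $v$ is the unique global maximum, and one checks $\ucat(f)=3$ (two components cannot cover all three rising arms).  Define, for each $i$, a component $g_i$ with $g_i(w_i)=3$, $g_i(u_i)=g_i(v)=4/3$, $g_i(u_j)=1/3$ and $g_i(w_j)=0$ for $j\neq i$, extended edge-linearly.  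Each $g_i$ is unimodal with mode at $w_i$, and $g_1+g_2+g_3=f$.  Thus there is a \emph{minimal} unimodal decomposition with no mode at $v$, so $v$ is not mode-forced.  This is exactly the ``balanced'' configuration ($\sum_{u\sim v}f(u)=6\geq 4=M$) that you flagged as the crux; the crux is not merely hard, it is where the statement breaks.

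The paper's proof avoids this entirely.  Rather than singling out the global maximum, it iteratively prunes any leaf $\ell$ with $f(\ell)\leq f(\text{neighbour})$ (equivalently, removes branches on which $f$ is monotone decreasing away from the root).  What survives is either a single vertex (then $f$ is unimodal) or a tree whose leaves $\ell$ satisfy: $f(\ell)$ strictly exceeds $f$ at the unique surviving neighbour, and every other branch of $\mtree\setminus\{\ell\}$ is monotone decreasing all the way to the original leaves.  Such an $\ell$ \emph{is} mode-forced, by essentially the leaf argument you already gave: any component with mode in a monotone-decreasing branch can have its mode pushed to $\ell$ without increasing the count, and once all modes lie in the surviving direction, $\sum_\alpha f_\alpha(\ell)\leq\sum_\alpha f_\alpha(u)=f(u)<f(\ell)$ gives the contradiction.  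In the counterexample above the pruning does nothing and the mode-forced vertices are the $w_i$, not $v$.
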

\begin{proof}
Choose any vertex $v$ and consider it the root of $\mtree$; then $\mtree$ becomes a union of several branches each having $v$ as a root. We will call a branch (a connected component of the complement of $v$, and all vertices in it excluding $v$) \style{insignificant}, if the values of $f$ are monotonically decreasing away from $v$ in that branch.

It is clear that removing all insignificant vertices does not increase the unimodal category of $f$ (but can generate more insignificant vertices). Pruning iteratively the tree of its insignificant vertices results either in a single vertex (in which case the original function $f$ is unimodal), or in a tree with at least two leaves, each of which is mode-forced.
\end{proof}

Specific examples of mode-forced vertices include the global maximum of $f$, as well as all local maxima of $\mtree$ such that all but one of the components of their complement are monotonically decreasing paths to leaves. 

\section{Sweeping}
\label{sec:sweep}
\subsection{Sweeping operation}
In this section we define a \style{sweeping operation} that will generate a function with a given mode.

Let $v$ be a vertex of $\mtree$ and $f\in\Dist(\mtree)$ satisfying the assumptions of \S\ref{sec:assumptions}. Define the function $h_{f,v}$ on $\mtree$ using the following procedure.

\begin{enumerate}
\item Orient all edges of $\mtree$ away from $v$ (making it the root of the oriented tree).
\item Set $h_{f,v}(v)=f(v)$.
\item If for an oriented edge, $u\longrightarrow w$, $h_{f,v}(u)$ is defined, then apply the \style{sweeping move}:
\begin{equation}
\label{eq:sweep}
h_{f,v}(w)
=
\left\{
\begin{array}{rl}
h_{f,v}(u) & \mbox{if} \quad f(u)<f(w);\\
\max(h_{f,v}(u)-(f(u)-f(w)),0) & \quad \mbox{otherwise} \\
\end{array}
\right .
\end{equation}
\end{enumerate}

Figure \ref{fig:propagate} below illustrates the sweeping mechanism.

\begin{figure}[htbp]
\begin{center}
\includegraphics[width=5in]{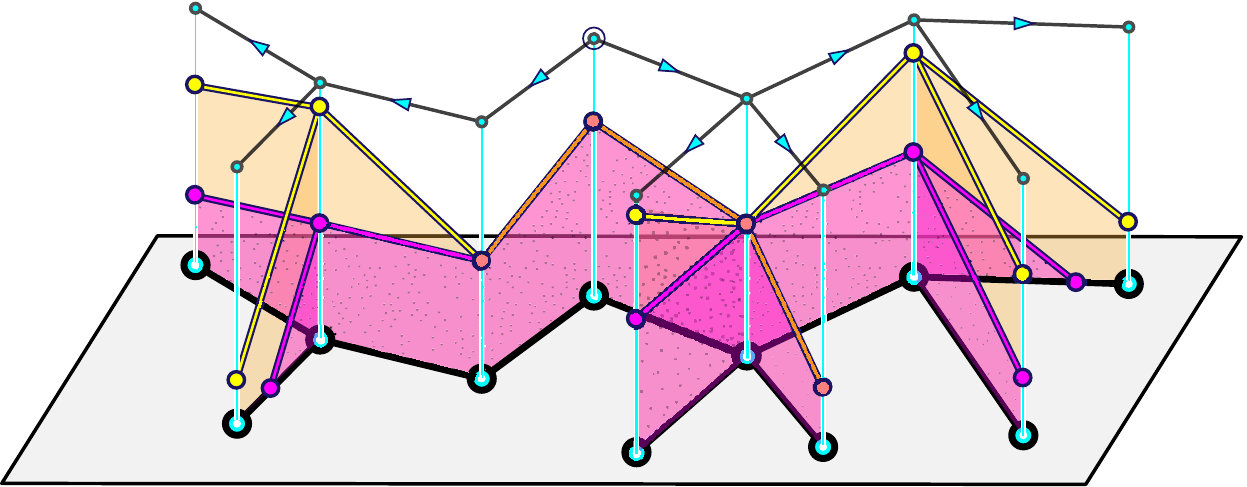}
\caption{Propagating $h_{f,v}$ (pink) given $f$ (yellow) from $v$ (the central mode). Note that the sweeping process can create new interior nodes in the trees where the function hits zero.}
\label{fig:propagate}
\end{center}
\end{figure}

\subsection{Remainders and freedom}
Given a vertex $v\in\mtree$, we will call the function $R_vf=f-h_{f,v}$ the $v$-\style{remainder}. This is nonnegative as, clearly, $h_{f,v}\leq f$ for any $v\in\mtree$. Also noted is that the support of $R_vf$ does not contain $v$.
The following is the critical result needed for our constructions.

\begin{thm}
\label{thm:nonshrinking}
If $\mtree'\subset\mtree$ is a mode-free subtree for $f$, and $v\notin \mtree'$, then $\mtree'$ is also mode-free for $R_vf$.
\end{thm}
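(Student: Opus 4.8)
The plan is to take an arbitrary unimodal decomposition $f=\sum_\alpha u_\alpha$ in which $\mtree'$ carries no mode, and transform it into a unimodal decomposition of $R_vf=f-h_{f,v}$ that still has no mode on $\mtree'$. The natural candidate is to subtract $h_{f,v}$ from the decomposition in a distributed way: we need to write $h_{f,v}=\sum_\alpha g_\alpha$ with $0\le g_\alpha\le u_\alpha$ so that each remainder $u_\alpha-g_\alpha$ is again unimodal, and so that any mode of $u_\alpha-g_\alpha$ that lies in $\mtree'$ can be moved off $\mtree'$. Because everything is edge-linear with modes at vertices (the standing assumption following Lemma~\ref{lem:vertices}), it suffices to define the $g_\alpha$ at vertices and interpolate.

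First I would record the key structural fact about the sweeping function: orienting $\mtree$ away from $v$, along any oriented path from $v$ the value $h_{f,v}$ is non-increasing, it starts at $f(v)$, it stays $\le f$ everywhere, and wherever $h_{f,v}>0$ it decreases at exactly the rate that $f$ decreases (i.e. $f-h_{f,v}$ is locally constant on the region where $f$ is decreasing away from $v$ and $h_{f,v}>0$). In particular the superlevel sets $h_{f,v}^{c}=h_{f,v}^{-1}([c,\infty))$ are exactly the subtrees obtained by following each branch out of $v$ until $h_{f,v}$ first drops below $c$; these are connected, contain $v$, and are nested — so $h_{f,v}$ is itself unimodal with mode $v$, and $v\notin\mtree'$ means $h_{f,v}$ has its mode off $\mtree'$.

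Next, for the splitting: at each vertex $w$, distribute the "debit" $h_{f,v}(w)$ among the components proportionally, or greedily, subject to $g_\alpha(w)\le u_\alpha(w)$; this is feasible since $\sum_\alpha u_\alpha(w)=f(w)\ge h_{f,v}(w)$. The real content is to check two things. (1) Each $g_\alpha$ can be chosen so that $u_\alpha-g_\alpha$ is still unimodal. Here I would exploit the local-constancy property above: on the branch region where $h_{f,v}$ is strictly decreasing with $f$, one can take $g_\alpha$ to absorb the decrease so that $u_\alpha-g_\alpha$ becomes *flat* there rather than developing a spurious local max; on the region where $h_{f,v}$ is constant (either already $0$, or pinned because $f$ was increasing away from $v$), $g_\alpha$ is constant too and unimodality of $u_\alpha-g_\alpha$ is inherited from that of $u_\alpha$ (superlevel sets only shrink by a constant shift, staying connected). (2) No new mode is created on $\mtree'$: a mode of $u_\alpha-g_\alpha$ on $\mtree'$ could only arise where we created a new flat piece or new zero of the function, and in each such case the flat region extends along the branch toward $v$, so the superlevel set structure lets us slide the representative mode along that branch until it exits $\mtree'$ (using that $\mtree'$ is a subtree and $v\notin\mtree'$, so the branch toward $v$ eventually leaves $\mtree'$), possibly after a contraction move (Lemma~\ref{lem:contraction}) to eliminate edges that became constant.

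The main obstacle I anticipate is (1) — verifying that the remainders $u_\alpha-g_\alpha$ stay genuinely unimodal, i.e. that subtracting the sweeping profile from an arbitrary unimodal component cannot disconnect a superlevel set. The danger is a component $u_\alpha$ whose mode is "downstream" of $v$ along some branch: subtracting an amount that decreases away from $v$ could in principle carve a valley between $u_\alpha$'s mode and the part of its support on the $v$-side. I would handle this by choosing the split $g_\alpha$ not proportionally but with a priority rule — charge the debit $h_{f,v}$ first against components whose superlevel sets, in the relevant direction, are "aligned" with the sweep — and then argue by a monotonicity/exchange argument that such a consistent choice exists globally on the tree, proceeding from $v$ outward exactly as the sweeping recursion (\ref{eq:sweep}) does. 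Once the split is in hand, unimodality of each piece and the absence of modes on $\mtree'$ both reduce to the superlevel-set bookkeeping sketched above.
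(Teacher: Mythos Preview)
Your plan attacks the problem globally: keep the full unimodal decomposition $f=\sum_\alpha u_\alpha$ on all of $\mtree$ and try to split $h_{f,v}=\sum_\alpha g_\alpha$ so that every $u_\alpha-g_\alpha$ remains unimodal on $\mtree$ with its mode still off $\mtree'$. You correctly flag step~(1) as the obstacle, and it is a real one: a component $u_\alpha$ whose mode sits downstream of $v$ can genuinely have a superlevel set split when you subtract a profile that decays away from $v$. Your remedy --- an unspecified ``priority rule'' together with a ``monotonicity/exchange argument'' to be carried out along the sweep --- is where the proof has to happen, and as written it is a hope rather than an argument. Nothing in the proposal explains why a globally consistent choice of $g_\alpha$'s exists, and the sliding move in~(2) presumes a flat region reaching out of $\mtree'$ that you have not established. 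So the core step is missing.

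The paper sidesteps exactly this difficulty by \emph{localizing to $\mtree'$} and weakening the structure it must preserve. Via Lemma~\ref{lem:free_dec}, mode-freeness of $\mtree'$ is equivalent to a decomposition $f|_{\mtree'}=\sum_{\leaf\in\leaves} f_\leaf$ into \emph{leaf functions}, each merely non-increasing away from its leaf --- no global unimodality on $\mtree$ is required. The whole task then becomes: modify the $f_\leaf$'s edge by edge so that the leaf function $f_{\leaf^*}$ attached to the $v$-side becomes $h_{f,v}+C$, while the others stay monotone. That reduces to a purely edge-local feasibility question, which the paper dispatches with a small linear-programming argument (Lemma~\ref{lem:expand_adm}, via Farkas). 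In other words, the missing idea in your plan is not a cleverer priority rule for the $u_\alpha$'s, but the realization that you should abandon the $u_\alpha$'s altogether on $\mtree'$ in favor of the weaker leaf-function structure; once you do that, the distribution problem becomes linear and local rather than combinatorial and global.
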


In other words, taking $v$-remainders does not shrink free subtrees not containing $v$.
We postpone the proof until \S\ref{sec:stability}.
%

\section{Greedy Algorithm}
\label{sec:greedy}

The algorithm given in \cite{BG} for computing a minimal unimodal decomposition of $f\in\Dist([a,b])$ involved sweeping from $a$ to $b$, identifying mode-forced vertices, then removing their contribution by computing remainders, until the entire interval was swept.\footnote{That is, using the language of this paper.} In essence, the same process is here employed for a tree in the following \style{greedy algorithm}:
\begin{enumerate}
\item Using Lemma \ref{lem:forced}, find a mode-forced vertex $v$.
\item Construct the function $h_{f,v}$; it is a component of the unimodal decomposition.
\item Compute the remainder $R_vf$ and iterate.
\end{enumerate}

The detection of mode-forced vertices is constructive by Lemma \ref{lem:forced}.
Theorem \ref{thm:nonshrinking}, when proved, will permit iteration of the greedy algorithm by preserving the mode-free subtree structure: as the mode-free subtrees do not shrink, any unimodal decomposition will survive under sweeping, preserving the (strong) unimodal category.

\section{Remainders and leaf functions }
We need some preliminary results.
\subsection{Leaf functions}
Let $\mtree'$ be a mode-free subtree of $\mtree$. Denote by $\leaves$ the set of leaves of $\mtree'$ (not necessarily leaves of $\mtree$).

\begin{lem}
\label{lem:free_dec}
The condition of $\mtree'$ being mode-free for $f$ is equivalent to the existence of \style{leaf functions}, $f_\leaf\in\Dist(\mtree'), \leaf\in\leaves$, such that each $f_\leaf$ is non-increasing away from $\leaf$ and
\begin{equation}\label{eq:free_dec}
\sum_\leaf f_\leaf=f  \quad \rm{on}\quad  \mtree'.
\end{equation}
\end{lem}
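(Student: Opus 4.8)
The plan is to prove the two implications separately, with the existence of leaf functions being the substantive direction.

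For the easy direction, suppose we have leaf functions $f_\leaf\in\Dist(\mtree')$, $\leaf\in\leaves$, each non-increasing away from $\leaf$, with $\sum_\leaf f_\leaf = f$ on $\mtree'$. A function on a tree that is non-increasing away from a single vertex $\leaf$ has upper excursion sets that are connected (they are the subtrees ``closest'' to $\leaf$) and in fact contractible, since every connected subtree of a tree is contractible; hence each $f_\leaf$ is unimodal with mode at $\leaf$. To produce a unimodal decomposition of $f$ on all of $\mtree$ with no mode inside $\mtree'$, I would extend each $f_\leaf$ by zero outside $\mtree'$ — this keeps it unimodal with mode at $\leaf\in\leaves\subset\mtree'$, and importantly $\leaf$ is a leaf of $\mtree'$, not an interior vertex, so no mode is forced into the interior of $\mtree'$; strictly, one should also absorb whatever part of $f$ lives on $\mtree\setminus\mtree'$ using any unimodal decomposition there, and check the modes sit outside the interior of $\mtree'$. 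Thus $\mtree'$ is mode-free.

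For the hard direction, suppose $\mtree'$ is mode-free, so there is a unimodal decomposition $f=\sum_\alpha \uni_\alpha$ on $\mtree$ with every mode $v_\alpha\notin\mtree'$ (more precisely, no mode in the interior — but leaves of $\mtree'$ need care; I'd first reduce to the case where modes avoid $\mtree'$ altogether, or treat a mode at a leaf $\leaf$ of $\mtree'$ as already being one of the desired $f_\leaf$). Fix a component $\uni_\alpha$ with mode $v_\alpha$. I claim the restriction $\uni_\alpha|_{\mtree'}$, being the restriction of a function whose excursion sets are connected subtrees of $\mtree$ that miss $v_\alpha$, is non-increasing away from exactly one leaf of $\mtree'$: namely, for each height $c$, the excursion set $\uni_\alpha^c$ is a subtree of $\mtree$, so $\uni_\alpha^c\cap\mtree'$ is a subtree of $\mtree'$, and as $c$ increases these nest down; since $v_\alpha\notin\mtree'$, the ``direction of increase'' of $\uni_\alpha$ within $\mtree'$ points toward whichever leaf $\leaf(\alpha)$ of $\mtree'$ is nearest to $v_\alpha$ in the tree metric. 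Then I would set $f_\leaf := \sum_{\alpha:\,\leaf(\alpha)=\leaf} \uni_\alpha|_{\mtree'}$. A sum of functions each non-increasing away from the same leaf $\leaf$ is again non-increasing away from $\leaf$, so each $f_\leaf$ has the required shape, and $\sum_\leaf f_\leaf = \sum_\alpha \uni_\alpha|_{\mtree'} = f|_{\mtree'}$.

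The main obstacle I anticipate is the boundary bookkeeping at the leaves of $\mtree'$: the definition of mode-free forbids modes \emph{in} $\mtree'$ (it is natural to read this as forbidding them in the interior, since the statement is about leaf functions peaked at the leaves), and one must be careful that every component's excursion sets, once intersected with $\mtree'$, really do concentrate toward a single leaf rather than splitting — this uses crucially that $\mtree'$ is a \emph{subtree} (connected) and that excursion sets of unimodal functions are connected, so their trace on $\mtree'$ cannot disconnect. A secondary technical point is continuity and compact support of the $f_\leaf$: these are inherited from the $\uni_\alpha$ since restriction to a closed subtree and finite summation preserve $\Dist$. Once the ``nearest leaf'' map $\alpha\mapsto\leaf(\alpha)$ is set up cleanly, the rest is routine.
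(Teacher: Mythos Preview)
Your ``hard'' direction (mode-free $\Rightarrow$ leaf functions) is exactly the paper's argument: group the unimodal summands $u_\alpha$ according to which connected component of $\mtree-\mtree'$ contains the mode $v_\alpha$, i.e.\ by the nearest leaf $\leaf(\alpha)\in\leaves$, and set $f_\leaf=\sum_{\leaf(\alpha)=\leaf}u_\alpha|_{\mtree'}$. Your added justification that $u_\alpha|_{\mtree'}$ is non-increasing away from $\leaf(\alpha)$ (because excursion sets are connected subtrees) is a useful elaboration of what the paper leaves implicit.

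Your ``easy'' direction (leaf functions $\Rightarrow$ mode-free) has a genuine gap. Extending each $f_\leaf$ by zero off $\mtree'$ and then separately decomposing $f$ on $\mtree-\mtree'$ does not work: first, the zero-extension is typically discontinuous at $\leaf$ (so not in $\Dist(\mtree)$); second, even if patched, the mode of that piece sits at $\leaf\in\mtree'$, so the resulting decomposition is \emph{not} mode-free under the paper's definition, which requires $\mtree'$ to be free of modes, not merely its interior. Your parenthetical attempt to reinterpret ``mode-free'' as ``interior-free'' is not what the paper means, and indeed the forward direction you just proved relies on every mode lying in some component of $\mtree-\mtree'$.

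The paper's fix is to \emph{glue} rather than juxtapose: pick any unimodal decomposition of $f$ on each component of $\mtree-\mtree'$, use the values of that decomposition at the adjoining leaf $\leaf$ to split $f_\leaf$ into matching pieces, and then each piece extends continuously into the component as a unimodal function whose mode lies in $\mtree-\mtree'$. This is the step your sketch is missing.
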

\begin{proof}
If $\mtree'$ is mode-free for $f$, then $f=\sum_\alpha u_\alpha$ has a unimodal decomposition without modes on $\mtree'$. For each $\leaf\in\leaves$, let $f_\leaf$ be the sum of the $u_\alpha$ over all $\alpha$ whose modes lie in the connected component of $\mtree-\mtree'$ adjacent at leaf $\leaf$.
Likewise, given such a decomposition of $f$ on $\mtree'$ as a sum of leaf functions $f_\lambda$, choose a unimodal decomposition of $f$ on each connected component of $\mtree-\mtree'$. Use the restriction of this to each $\leaf\in\leaves$ to evenly divide the leaf functions into summands. The resulting decomposition is unimodal (perhaps not minimally so) and mode-free on $\mtree'$.
\end{proof}

\subsection{Remainders preserve mode-freeness}
\label{sec:stability}

Consider an edge $[u,w]$ in $\mtree'$. The restrictions of the  leaf functions to this edge can be modified so that the constraints of Lemma \ref{lem:free_dec} are still satisfied. In what follows, let $\leaf^*$ be the leaf adjacent to the component of $\mtree-\mtree'$ containing $v$.

We want to prove that $R_vf$ restricted to $\mtree'$ can still be decomposed as described in Lemma \ref{lem:free_dec}.
To achieve this we will modify the decomposition \eqref{eq:free_dec} edge by edge, away from $\leaf^*$ in such a way that $f_{\leaf^*}$ is always changed according to the sweeping rule \eqref{eq:sweep}, while the remaining functions $f_\leaf, \leaf\neq \leaf^*$ continue to satisfy the condition of being non-increasing away from $\leaf$.

It is immediate that if the restriction of $f_\leaf$ to the edge $[uw]$ is linear interpolating $f_\leaf(u)\geq f_\leaf(w)$, then any modification of $f_\leaf(u)$ to $g_\leaf(u)$ and $f_\leaf(w)$ to $g_\leaf(w)$ extends to an edge-linear, non-increasing  from $\leaf$ function $g_\leaf$ as long as $f_\leaf(u)\geq  g_\leaf(u)\geq  g_\leaf(w)\geq  f_\leaf(w)$, see Fig. \ref{fig:modify}. We will call any such modification of a function on an edge {\em admissible}.

\begin{figure}[htbp]
\begin{center}
\includegraphics[width=4.0in]{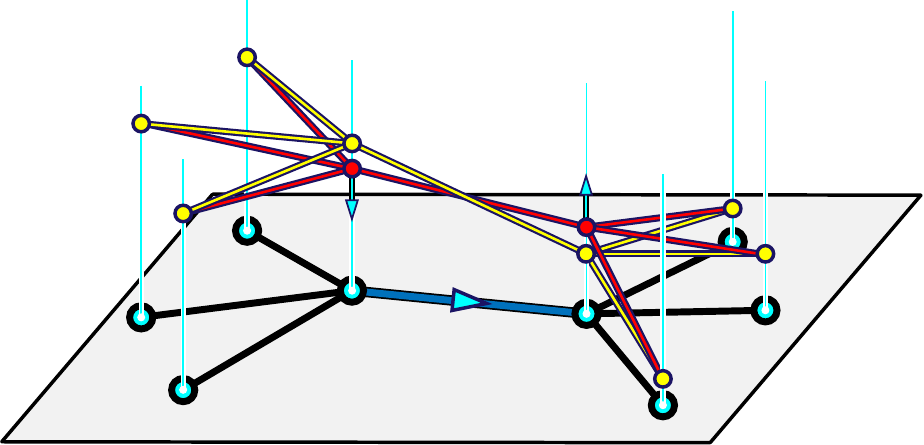}
\caption{Modifying a left-to-right non-increasing function along an edge locally preserves the semi-monotonicity.}
\label{fig:modify}
\end{center}
\end{figure}

The following lemma asserts that we can restrict our attention to admissible modifications of aggregations of leaf functions.

\begin{lem}
\label{lem:expand_adm}
Consider an edge, $[uw]$ and a collection of leaf functions $f_\leaf,\leaf\in\leaves$, each non-increasing from $u$ to $w$. Their sum $f_{\leaves}=\sum_{\leaf\in\leaves} f_\leaf$ is also non-increasing in the same direction. Let $g_{\leaves}$ be an admissible modification of $f_{\leaves}$. Then there exist admissible modifications $g_\leaf$ of each $f_\leaf, \leaf\in{\leaves}$ so that $\sum_{\leaf\in\leaves}g_\leaf=g_{\leaves}$ on $[u,w]$.
\end{lem}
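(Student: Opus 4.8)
The statement is a redistribution lemma: on a single edge $[uw]$, an admissible modification of the aggregate $f_\leaves$ can be split back among the summands $f_\leaf$ so that each individual piece is still admissible (i.e.\ still non-increasing from $u$ to $w$ and still sandwiched between its original endpoint values). Since everything is edge-linear, the whole statement reduces to the two pairs of numbers $\big(f_\leaf(u),f_\leaf(w)\big)_{\leaf\in\leaves}$ and the target pair $\big(g_\leaves(u),g_\leaves(w)\big)$, subject to $f_\leaves(u)\ge g_\leaves(u)\ge g_\leaves(w)\ge f_\leaves(w)$ and $\sum_\leaf f_\leaf(u)=f_\leaves(u)$, $\sum_\leaf f_\leaf(w)=f_\leaves(w)$. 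I would therefore phrase the proof purely as a statement about distributing the numbers $A:=f_\leaves(u)-g_\leaves(u)\ge 0$ (the total amount to be removed at $u$) and $B:=f_\leaves(w)-g_\leaves(w)\ge 0$ (the total to be removed at $w$), with the admissibility constraint for each $\leaf$ being $0\le a_\leaf\le f_\leaf(u)$, $0\le b_\leaf\le f_\leaf(w)$, $\sum a_\leaf=A$, $\sum b_\leaf=B$, and the monotonicity constraint $f_\leaf(u)-a_\leaf\ge f_\leaf(w)-b_\leaf$, i.e.\ $a_\leaf-b_\leaf\le f_\leaf(u)-f_\leaf(w)=:d_\leaf\ge 0$.

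\textbf{Construction.} The cleanest route is a two-stage greedy assignment. First handle the $w$-endpoint: we must remove a total of $B$ from the values at $w$, with $b_\leaf\le f_\leaf(w)$ and $\sum f_\leaf(w)=f_\leaves(w)\ge g_\leaves(w)$, so $B\le f_\leaves(w)$ and such $b_\leaf$ exist (e.g.\ proportionally, $b_\leaf=B\,f_\leaf(w)/f_\leaves(w)$, or by any feasible allocation). Having fixed the $b_\leaf$, we must now remove $A$ from the $u$-values with $0\le a_\leaf\le f_\leaf(u)$ and, crucially, $a_\leaf\le d_\leaf+b_\leaf=f_\leaf(u)-f_\leaf(w)+b_\leaf=f_\leaf(u)-(f_\leaf(w)-b_\leaf)$. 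Write $c_\leaf:=f_\leaf(u)-(f_\leaf(w)-b_\leaf)=g_\leaf(w)+\big(f_\leaf(u)-g_\leaf(w)\big)$; note $c_\leaf\ge 0$ since $g_\leaf(w)=f_\leaf(w)-b_\leaf\le f_\leaf(w)\le f_\leaf(u)$, and $c_\leaf\le f_\leaf(u)$, so the active upper bound on $a_\leaf$ is simply $c_\leaf$. It remains to check $\sum_\leaf c_\leaf\ge A$: indeed $\sum_\leaf c_\leaf=\sum_\leaf f_\leaf(u)-\sum_\leaf(f_\leaf(w)-b_\leaf)=f_\leaves(u)-(f_\leaves(w)-B)=f_\leaves(u)-g_\leaves(w)\ge f_\leaves(u)-g_\leaves(u)=A$, using $g_\leaves(w)\le g_\leaves(u)$. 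So a feasible allocation of $A$ among the intervals $[0,c_\leaf]$ exists (greedily, or proportionally $a_\leaf=A\,c_\leaf/\sum_\mu c_\mu$ when the denominator is positive; if $\sum c_\mu=0$ then $A=0$ and we take $a_\leaf=0$). Setting $g_\leaf(u):=f_\leaf(u)-a_\leaf$, $g_\leaf(w):=f_\leaf(w)-b_\leaf$ and extending $g_\leaf$ edge-linearly gives, by the discussion preceding Lemma~\ref{lem:expand_adm}, an admissible modification of each $f_\leaf$, and $\sum_\leaf g_\leaf(u)=g_\leaves(u)$, $\sum_\leaf g_\leaf(w)=g_\leaves(w)$, hence $\sum_\leaf g_\leaf=g_\leaves$ on $[u,w]$.

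\textbf{Remarks on the structure of the argument.} The only subtlety, and the step I would flag as the heart of the matter, is the ordering of the two stages: one must decrement at $w$ \emph{before} deciding the decrements at $u$, because the monotonicity constraint $g_\leaf(u)\ge g_\leaf(w)$ couples the two endpoints, and lowering $g_\leaf(w)$ only relaxes that constraint. Doing it in the reverse order can fail — e.g.\ greedily removing everything from one summand's $u$-value may push $g_\leaf(u)$ below $f_\leaf(w)$ while another summand has no slack to absorb the needed $w$-decrement. The key inequality making stage two succeed, $f_\leaves(u)-g_\leaves(w)\ge A$, is exactly where the hypothesis $g_\leaves(u)\ge g_\leaves(w)$ (part of admissibility of $g_\leaves$) is used; the other two admissibility inequalities for $g_\leaves$ are used to ensure $A,B\ge 0$ and $B\le f_\leaves(w)$. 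Everything else is bookkeeping with finite sums of nonnegative reals, and the edge-linear reduction means no analysis beyond the endpoints is needed.
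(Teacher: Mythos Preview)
Your reduction to endpoint arithmetic is the right move, and a two-stage greedy allocation is a perfectly good strategy, but you have the direction of the modification at $w$ reversed, and this breaks the argument as written. Recall the paper's definition of an admissible modification on $[uw]$: it requires
\[
f_\leaf(u)\ \ge\ g_\leaf(u)\ \ge\ g_\leaf(w)\ \ge\ f_\leaf(w),
\]
so the value at $u$ may only go \emph{down} while the value at $w$ may only go \emph{up}. Consequently your quantity $B:=f_\leaves(w)-g_\leaves(w)$ satisfies $B\le 0$, not $B\ge 0$; the very first stage of your construction (``remove a total of $B\ge 0$ at $w$ with $b_\leaf\in[0,f_\leaf(w)]$'') is therefore infeasible whenever $g_\leaves(w)>f_\leaves(w)$. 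A one-leaf example already shows the failure: take $f_\leaf(u)=2$, $f_\leaf(w)=0$, $g_\leaves(u)=g_\leaves(w)=1$; this $g_\leaves$ is admissible, but your $B=-1$.

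The fix is a sign flip, after which your scheme goes through cleanly. Write $g_\leaf(w)=f_\leaf(w)+b_\leaf$ with $b_\leaf\ge 0$; the monotonicity constraint becomes $a_\leaf+b_\leaf\le d_\leaf:=f_\leaf(u)-f_\leaf(w)$, and the totals are $\sum a_\leaf=A:=f_\leaves(u)-g_\leaves(u)\ge 0$ and $\sum b_\leaf=B':=g_\leaves(w)-f_\leaves(w)\ge 0$. Stage one: allocate $B'$ with $b_\leaf\in[0,d_\leaf]$; feasible since $\sum d_\leaf=f_\leaves(u)-f_\leaves(w)\ge g_\leaves(w)-f_\leaves(w)=B'$. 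Stage two: allocate $A$ with $a_\leaf\in[0,d_\leaf-b_\leaf]$; feasible since $\sum(d_\leaf-b_\leaf)=f_\leaves(u)-g_\leaves(w)\ge f_\leaves(u)-g_\leaves(u)=A$, which is exactly the inequality you already identified as the heart of the matter. Your remark about ordering the stages (handle $w$ first) remains correct for the corrected version.

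Once repaired, your argument is genuinely different from the paper's: the paper proves feasibility of the linear system by Farkas' lemma, deriving a contradiction from a putative dual certificate of infeasibility, whereas you give a direct constructive allocation. Your route is more elementary and yields explicit $g_\leaf$; the paper's LP-duality approach is slicker to state but non-constructive and invokes heavier machinery than the problem requires.
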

\begin{proof}
By admissibility, we have that $f_{\leaves}(u)\geq g_{\leaves}(u)\geq g_{\leaves}(w)\geq f_{\leaves}(w)$. The claim is equivalent to existence of a solution to the following systems of (in)equalities on the variables $g_\leaf(\cdot)$:
\begin{equation}
\label{eq:primal}
\begin{array}{rcl}
f_\leaf(u) & \geq & g_\leaf(u),  \quad  \leaf\in\leaves \\
 g_\leaf(u) & \geq & g_\leaf(w),  \quad \leaf\in\leaves \\
 g_\leaf(w)& \geq &  f_\leaf(w), \quad \leaf\in\leaves \\
\sum_{\leaf\in\leaves}g_\leaf(u) &=& g_{\leaves}(u), \\
\sum_{\leaf\in\leaves}g_\leaf(w) &=& g_{\leaves}(w).
\end{array}
\end{equation}

Farkas' lemma implies that the solution to \eqref{eq:primal} {\em does not} exist if and only if the dual system (in variables $\mu^u_\leaf\geq 0, \mu_\leaf\geq 0, \mu^w_\leaf\geq 0, \nu_u, \nu_w$), corresponding to the lines in \eqref{eq:primal}) has a solution:
\begin{eqnarray}
-\mu^u_\leaf+\mu_\leaf-\nu_u &=&0;\\ \label{eq:dual1}
 \mu^w_\leaf-\mu_\leaf+\nu_w &=&0;\\ \label{eq:dual2}
\sum_\leaf \left((-\mu^u_\leaf f_\leaf(u)+\mu^w_\leaf f_\leaf(w)\right)-\nu_u g_{\leaves}(u)+\nu_w g_{\leaves}(w)&>&0. \label{eq:dual3}
\end{eqnarray}

Using the fact that $\sum_\leaf f_\leaf(u)=f_{\leaves}(u)$ and $\sum_\leaf f_\leaf(w)=f_{\leaves}(w)$, transform the left hand side of \eqref{eq:dual3} as
\begin{eqnarray}
\label{eq:dual3.1}
0<\sum_\leaf \left((-\mu^u_\leaf f_\leaf(u)+\mu^w_\leaf f_\leaf(w)\right)-\nu_u g_{\leaves}(u)+\nu_w g_{\leaves}(w)&=\\
\sum_\leaf \left((-\mu_\leaf+\nu_u) f_\leaf(u)+(\mu_\leaf-\nu_w) f_\leaf(w)\right)-\nu_u g_{\leaves}(u)+\nu_w g_{\leaves}(w)&=\\
\sum_\leaf \mu_\leaf(-f_\leaf(u)+f_\leaf(w))+\nu_u(f_{\leaves}(u)-g_{\leaves}(u))+\nu_w(g_{\leaves}(w)-f_{\leaves}(w)).
\label{eq:bound}
\end{eqnarray}

Further, \eqref{eq:dual1} implies that $\nu_u\leq \mu_\leaf$ for all $\leaf$, and therefore $\nu_u\leq \min \mu_\leaf=:\mu$. Similar reasoning (using \eqref{eq:dual2}) leads also to $\nu_w\leq\mu$.

Using these inequalities with \eqref{eq:bound}, and taking into account that $f_{\leaves}(u)\geq g_{\leaves}(u)\geq g_{\leaves}(w)\geq f_{\leaves}(w)$, we conclude that \eqref{eq:bound} is bounded above by
\begin{equation}
\mu(-f_{\leaves}(u)+f_{\leaves}(w))  +\mu(f_{\leaves}(u)-g_{\leaves}(u))+\mu(g_{\leaves}(w)-f_{\leaves}(w))\leq 0,
\end{equation}
contradicting the implication \eqref{eq:dual3.1} of non-existence of a solution to \eqref{eq:primal}.
\end{proof}

\section{Stability of mode-free subtrees}
In this section we finally prove Theorem \ref{thm:nonshrinking} on the stability of mode-free subtrees.

\begin{proof}[Proof of Theorem \ref{thm:nonshrinking}]
Lemmas \ref{lem:free_dec} and \ref{lem:expand_adm} imply that any admissible modification of $f$ on an edge of $\mtree'$ maintains a leaf function decomposition, and thus, the mode-free nature of $\mtree'$. To show that $R_vf$ preserves the mode-free subtree $\mtree'$ for $v\not\in\mtree'$, we apply the sweeping algorithm generating $h_{f,v}$ along the edges in $\mtree'$, propagating away from $\leaf^*$ (the leaf of $\mtree'$ adjacent to the component of $\mtree-\mtree'$ containing $v$) and performing admissible modifications.

Specifically, if we modify the leaf functions so that $f_{\leaf^*}=h_{f,v}+C$ for some constant $C\geq 0$, then we have a leaf function decomposition of $R_vf$ on $\mtree'$. If $f_{\leaf^*}(\leaf^*)<h_{f,v}(\leaf^*)$, then raising $f_{\leaf^*}(\leaf^*)$ to $h_{f,v}(\leaf^*)$ and lowering the other leaf functions is admissible. In this case, $C=0$. 

Fix an edge $[uw]$ where the leaf functions are to be modified. We assume that the propagation happens from $u$ to $w$ (that is $\leaf^*$ is in the component of $\mtree-(u,w)$ containing $u$). Consider the values $f_u=f(u)$ and $f_w=f(w)$ at the vertices. Decompose these as follows:
\begin{equation}
\label{eq:leafdecomps}
    f_u = f_u^* + f_u^\rightarrow + f_u^\leftarrow
    \quad
    f_w = f_w^* + f_w^\rightarrow + f_w^\leftarrow
\end{equation}
where $f^*_u=f_{\leaf^*}(u)$ and $f^*_w=f_{\leaf^*}(w)$ are values of the leaf function corresponding to $\leaf^*$; the $f^\rightarrow_u$, $f^\rightarrow_w$, $f^\leftarrow_u$, and $f^\leftarrow_w$ terms are values (at $u$ and $w$) of sums of the remaining leaf functions that are respectively oriented (nonincreasing) from $u$ to $w$ ($\rightarrow$) and from $w$ to $u$ ($\leftarrow$).
By Lemma \ref{lem:expand_adm}, it suffices to give admissible modifications of these aggregated leaf functions --- that is of values $f^\rightarrow_u$, $f^\rightarrow_w$, $f^\leftarrow_u$, and $f^\leftarrow_w$.

After potentially subdividing $\mtree$ based on where $h_{f,v}$ hits zero, there are two cases, corresponding to $f$ being increasing or decreasing from $u$ to $v$. If $f$ is decreasing, then $f_u-f_w\leq f_u^*-f_w^*$, so that its decrement is less or equal than the decrement of $f_{\leaf^*}$.
For these cases, we modify $f$-values of leaf functions at $u$ and $w$ to $g$-values as per Lemma \ref{lem:expand_adm}. These modifications are given in Table \ref{tab:mod} and can be checked as admissible and summing to $f$ as per (\ref{eq:leafdecomps}).

\begin{center}
\begin{tabular}{|c||c|c|c||c|c|c|}
  \hline
  CASE & $g_u^\rightarrow$ & $g_u^\leftarrow$ & $g_u^*$ & $g_w^\rightarrow$ & $g_w^\leftarrow$ & $g_w^*$  \\ \hline\hline
  $f_u-f_v\geq 0$   & $f_u^\rightarrow$  & $f_u^\leftarrow$ & $f_u^*$ & $f_u^\rightarrow$ & $f_w-f_u+f_u^\leftarrow$ & $f_u^*$  \\ \hline
  $f_u-f_v<0$   & $f_u^\rightarrow$  & $f_u^\leftarrow$ & $f_u^*$ & $f_u^\rightarrow$ & $f_u^\leftarrow$ & $f_u^*-f_u+f_w$  \\ \hline
\end{tabular}
\label{tab:mod}
\end{center}
Sweeping over $\mtree'$, this modifies the leaf function at $\leaf^*$ to be of the form $h_{f,v}+C$. Thus, modifying $f$ to the remainder $R_vf$ has the effect of maintaining a leaf function decomposition on $\mtree'$.
\end{proof}

\begin{cor}
Under the assumptions on $f\in\Dist(\mtree)$ from \S\ref{sec:assumptions}, the greedy algorithm of \S\ref{sec:greedy} returns a minimal unimodal decomposition.
\end{cor}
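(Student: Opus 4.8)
The number $N$ of iterations the algorithm performs equals the number of unimodal components it outputs, so it suffices to prove $\ucat(f)\le N\le\ucat(f)$. The lower bound is the routine half. By construction the sweeping rule \eqref{eq:sweep} makes $h_{f,v}$ nonincreasing along every edge oriented away from $v$, so each nonempty superlevel set of $h_{f,v}$ is a subtree containing $v$, hence contractible, and $h_{f,v}$ is unimodal with mode $v$. Since $0\le h_{f,v}\le f$, the remainder $R_vf=f-h_{f,v}$ is again nonnegative, compactly supported and edge-linear with finitely many critical points (new breakpoints arising only where $h_{f,v}$ hits $0$), so it meets the standing hypotheses of \S\ref{sec:setup} and Lemma \ref{lem:forced} keeps applying. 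By induction, whenever the algorithm halts it has expressed $f$ as a sum of $N$ unimodal functions, so $N\ge\ucat(f)$.

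\textbf{Reduction to one inequality.} Everything else reduces to the claim that \emph{one greedy step strictly drops the unimodal category}: if $v$ is mode-forced for $f$, then $\ucat(R_vf)\le\ucat(f)-1$. Granting this, iterating gives $0\le\ucat(R^{\,j}f)\le\ucat(f)-j$, so the algorithm reaches the zero function --- and halts --- after at most $\ucat(f)$ steps; it cannot halt earlier, since a zero remainder after $j$ steps exhibits $f$ as a sum of $j$ unimodal pieces, forcing $j\ge\ucat(f)$. Hence $N=\ucat(f)$, which is minimality, and termination comes for free. (For the final step one also records that a unimodal $g$ with its mode at a vertex $m$ is nonincreasing away from $m$, whence the sweep reconstructs it: $h_{g,m}=g$ and $R_mg=0$.)

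\textbf{Proving the key inequality.} I would begin with a minimal unimodal decomposition $f=u_1+\dots+u_k$, $k=\ucat(f)$. As $v$ is mode-forced, some summand is maximized at $v$; any two summands maximized at $v$ are each nonincreasing away from $v$, hence so is their sum, which is therefore again unimodal with mode $v$ --- so after merging I may assume $u_1$ is the unique summand maximized at $v$, the remaining $u_2,\dots,u_k$ being maximized elsewhere. The goal is a decomposition of $R_vf$ using only the (suitably modified) $u_2,\dots,u_k$, i.e.\ to let $h_{f,v}$ absorb $u_1$. The mechanism is the edge-by-edge redistribution from the proof of Theorem \ref{thm:nonshrinking} (\S\ref{sec:stability}), run now with $u_1$ in the role of the distinguished leaf function $f_{\leaf^*}$ and the mode-forced vertex $v$ --- which, being the leaf of the pruned tree produced by Lemma \ref{lem:forced}, has all but one of its branches monotone-decreasing --- in the role of $\leaf^*$. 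Since the sweep reproduces $f$ verbatim on those monotone-decreasing ``insignificant'' branches of \S\ref{sec:mode-free}, $R_vf$ vanishes off the single remaining branch, on which $v$ is a genuine leaf and on which $\ucat$ is unchanged by discarding the others (as in the proof of Lemma \ref{lem:forced}); so one reduces to that branch and sweeps edge by edge away from $v$, performing at each edge the admissible modifications tabulated in \S\ref{sec:stability}. Lemma \ref{lem:expand_adm} then lets the aggregate playing the role of $u_1$ be raised at every edge to match $h_{f,v}$ while each other aggregate stays nonincreasing away from its own maximizer, so that after the sweep the $u_1$-aggregate has become all of $h_{f,v}$; subtracting it leaves $f-h_{f,v}=R_vf$ written as a sum of the modified images of $u_2,\dots,u_k$, i.e.\ $k-1$ unimodal summands.

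\textbf{The main obstacle.} The hard part is exactly this last step: upgrading the qualitative conclusion of Theorem \ref{thm:nonshrinking} (a mode-free subtree does not shrink under sweeping) to the quantitative statement that a minimal decomposition loses precisely one component. The subtlety is that $h_{f,v}$ need \emph{not} dominate $u_1$ --- along an edge where the sweep has already driven $f$ down to $0$ one may have $h_{f,v}<u_1$ --- so the deficit must be borrowed from $u_2,\dots,u_k$ as the sweep advances, and one must verify this borrowing is always admissible. That is precisely what Lemma \ref{lem:expand_adm}, together with the vertex-value splittings \eqref{eq:leafdecomps} and the case table of \S\ref{sec:stability}, is built to supply; the one new feature relative to the proof of Theorem \ref{thm:nonshrinking} is that it must be invoked for all the leaf-function aggregates simultaneously rather than for a single $f_{\leaf^*}$, and checking that this goes through without obstruction is the heart of the argument.
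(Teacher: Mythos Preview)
Your approach is correct and is exactly the argument the paper intends: the paper gives no separate proof of the Corollary at all, relying on the sentence in \S\ref{sec:greedy} that ``as the mode-free subtrees do not shrink, any unimodal decomposition will survive under sweeping,'' and you have correctly unpacked this into the inequality $\ucat(R_vf)\le\ucat(f)-1$ and traced it back to the edge-by-edge admissible-modification machinery of \S\ref{sec:stability} and Lemma~\ref{lem:expand_adm}. Your diagnosis that Theorem~\ref{thm:nonshrinking} as a black box does not literally yield the drop in category --- one must rerun its proof with the unimodal summands $u_i$ playing the role of leaf functions, each being monotone on every edge once one records which side its mode lies on --- is accurate and is indeed the substance behind the paper's one-line justification; the global-coherence check you flag as ``the heart of the argument'' is real but goes through because the table in \S\ref{sec:stability} never alters values at the already-processed endpoint $u$.
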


\section{Conclusions}
The present work gives a constructive method for the computation of minimal unimodal decompositions of densities on trees. We end with a few remarks.
\begin{enumerate}
\item
Unimodal decompositions are far from unique. Minimal unimodal decompositions are likewise not unique, but in a structured manner. The mode-forced vertices and mode-free subtrees provide a skeleton on which a given density hangs. By performing the sweeping moves of the greedy algorithm in different ``directions'' one arrives at many minimal unimodal decompositions. The analogous variations over an interval consist in sweeping from left-to-right or right-to-left \cite{BG}.
\item
Assume that the values of $f$ on the $V$ vertices of $\mtree$ are given as the input. As written, the greedy algorithm for generating a unimodal decomposition is $O(V\ucat(\dist))$. One can do better. For example, if the modes have supports of uniformly bounded size $S$, the algorithm runtime drops to $O(S \ucat(f))$. 
\item
Of course, the restriction of these results to trees is suboptimal. For applications of unimodal decompositions in disciplines where precise geometric data can be hard to come by (e.g., phylogenetics or neuroscience), graphs with cycles are not only possible but (especially in the case of neuroscience) critical. The generalization from graphs to higher dimensional domains is likewise important but appears formidable, depending on the model of unimodal decomposition employed. We view the construction of a minimal unimodal decompositions on graphs to be an important prerequisite for this challenge.
\end{enumerate}

\paragraph{\bf Acknowledgements.} This research was done while YB was visiting the Departments of Mathematics and ESE of the University of Pennsylvania - the hospitality of both departments is warmly appreciated.

\end{document}